\newcommand{\giu}{{\medskip\noindent}}
\newcommand{\Giu}{{\bigskip\noindent}}
\newcommand{\nl}{{\smallskip\noindent}}
\newcommand{\x}{ {\xi}   }
\newcommand{\p}{ {\pi}   }
\newcommand{\h}{ {\eta}   }
\newcommand{\vf}{ {\varphi}   }
\newcommand{\ee}{{\rm \bf e}}
\newcommand{\etar}{{\hat\eta}\,}
\newcommand{\nur}{{\hat\nu}\,}
\newcommand{\epsr}{{\hat\e}\,}
\font\teneufm=eufm10 \font\seveneufm=eufm7 \font\fiveeufm=eufm5
\newtheorem{thm}{Theorem}[section]
\newtheorem{pro}[thm]{Proposition}
\newtheorem{lem}[thm]{Lemma}
\newtheorem{rem}[thm]{Remark}
\newcommand\brem{\begin{rem}\rm \small}
\newcommand\erem{\end{rem}}
\renewcommand{\proof}{\noi {\bf Proof.}\ }
\newcommand{\eproof}{\hskip.5truecm
\vrule width 1.7truemm height 3.5truemm depth 0.truemm
\par\Giu}
\newcommand{\e}{\varepsilon}
\renewcommand{\a}{\alpha}
\newcommand{\noi}{\noindent}
\newcommand\equ[1]{{\rm (\ref{#1})}}
\renewcommand{\Im}{{\, \rm Im \,}}
\newcommand\beq[1]{ \begin{equation}\label{#1} }
\newcommand{\eeq}{ \end{equation} }
\newcommand\beqa[1]{ \begin{eqnarray} \label{#1}}
\newcommand{\eeqa}{ \end{eqnarray} }
\newcommand{\beqano}{ \begin{eqnarray*} }
\newcommand{\eeqano}{ \end{eqnarray*} }
\newcommand\bB{{\mathbb{B}}}
\newcommand\ck[1]{{C^{#1}_{\rm per,0}}}
\newcommand\real{{\mathbb{R}}}
\newcommand\integer{{\mathbb{Z}}}
\newcommand\dst{\displaystyle}
\newcommand{\R}{\mathbbm R}
\newcommand{\Z}{\mathbbm Z}
\newtheorem{theorem}{Theorem}
\newtheorem{proposition}[theorem]{Proposition}
\newtheorem{lemma}[theorem]{Lemma}
\theoremstyle{definition}
\begin{document}

\title{\bf The spin--orbit resonances of the Solar system:\\
A mathematical treatment matching physical data\thanks{{\bf Acknowledgments.}
We thank J. Castillo-Rogez, A. Celletti, M. Efroimsky
and
 F. Nimmo
for useful discussions. \newline
Partially supported by  the MIUR
grant
``Critical Point Theory and Perturbative Methods for Nonlinear Differential Equations'' (PRIN2009).
}}
\author{
Francesco Antognini \\ \small
\vspace{-.2truecm}
{\scriptsize Departement Mathematik}
\\{\scriptsize  ETH-Z\"urich}
\vspace{-.2truecm}
\\{\scriptsize CH-8092 Z\"urich, Switzerland }
\vspace{-.2truecm}
\\{\scriptsize antognif@math.ethz.ch}
\and
Luca Biasco and Luigi Chierchia \\ \small
\vspace{-.2truecm}
{\scriptsize Dipartimento di Matematica e Fisica}
\\{\scriptsize  Universit\`a  ``Roma Tre''}
\vspace{-.2truecm}
\\{\scriptsize Largo S.L. Murialdo 1, I-00146 Roma (Italy) }
\vspace{-.2truecm}
\\{\scriptsize biasco,luigi@mat.uniroma3.it}
}

\date{\small Decemebr  9,  2013}

\maketitle

\vspace{-1.truecm}

\begin{abstract}{\footnotesize\noindent
In the mathematical framework of a  restricted, slightly dissipative spin--orbit model, we prove the existence of periodic orbits for astronomical parameter values corresponding to {\sl all} satellites of the Solar system observed in  exact spin--orbit resonance.
}
\end{abstract}

{\footnotesize
\nl
{{\bf Keywords:} Periodic orbits. Celestial Mechanics.  Spin--orbit resonances.
Moons in the Solar systems. Mercury. Dissipative systems.

\nl
{{\bf MSC2000 numbers:}
70F15, 70F40,
70E20,
70G70,
70H09, 70H12,
34C23,  34C25, 34C60,
34D10
}}

}

\tableofcontents

\newpage

\section{Introduction and results}

\nl
{$\bullet$} {\it Satellites in spin--orbit resonance}

\nl
One of the many fascinating features of the Solar system is the presence of moons moving
in a ``synchronous'' way around their planets, as experienced, for example, by
earthlings looking always the same familiar face of their satellite. Indeed, eighteen moons
of our
Solar system
move in a so--called 1:1 spin--orbit resonance: while performing
a complete revolution on a (approximately) Keplerian ellipse around their principal
body, they also  complete a rotation around their spin axis (which is -- again, approximately
-- perpendicular to revolution plane), in this way these moons always show the same side
to their hosting planets.

\nl
The  list of the eighteen moons is the following:
  Moon (Earth);
 Io, Europa, Ganymede, Callisto (Jupiter);
 Mimas, Enceladus, Tethys, Dione, Rhea, Titan, Iapetus,
(Saturn); Ariel, Umbriel, Titania, Oberon, Miranda (Uranus); Charon (Pluto);
minor bodies with mean radius smaller  than 100 Km are not considered (see, however, Appendix~\ref{minor bodies}).

\nl
There is only one more occurrence of spin--orbit resonance in the Solar system: the strange
case of the 3:2 resonance of Mercury around the Sun (i.e., Mercury
rotates three times on its spin axis, while making two orbital revolutions
around the Sun).

\nl In this paper we discuss a mathematical theory, which
is consistent with
the existence of all spin--orbit resonances of the
Solar system; in other words, we {\sl prove a theorem}, in a
framework of a well--known simple ``restricted spin--orbit
model'', establishing the existence of periodic orbits {\sl for
parameter values corresponding to   all the satellites 
(or
Mercury) in our Solar system} observed  in  spin--orbit resonance.

\nl
We remark that, in dealing with  mathematical models
trying to describe physical phenomena,  
one may be able to rigorously prove theorems only for
parameter values, typically, quite smaller than the physical ones;
on the other hand, for the true physical values, typically, one only obtains
numerical evidence.
In the present case, thanks
to sharp estimates, we are able to fill such a  gap and prove rigorous results for
the real parameter values.
Moreover, such results might also be an indication that 
the mathematical model adopted
is quite effective in describing the  physics.

\Giu
{$\bullet$} {\it The mathematical model}

\nl
We consider a  simple -- albeit  non trivial --  model in
which the center of mass of the satellite moves on a given
two--body Keplerian orbit focussed on a massive point (primary
body) exerting gravitational attraction on the body of the
satellite modeled by a triaxial ellipsoid with
equatorial  axes $a\ge b>0$ and polar axis $c$;  the spin
polar axis is assumed to be perpendicular to the Keplerian orbit
plane\footnote{The largest relative inclination (of the spin axis
on the orbital plane) is that of Iapetus ($8.298^{\circ}$)
followed by Mercury ($7^{\circ}$), Moon ($5.145^{\circ}$), Miranda
($4.338^{\circ}$); all the other moons have an inclination of
the order of one degree or less.}; finally, we include also small
dissipative effects (due to the possible internal
non--rigid structures of the satellite),  according to the  ``viscous--tidal model,
with a linear dependence on the tidal frequency'' (\cite{L}): essentially, the dissipative term is given by the average over one revolution period  of the so--called MacDonald's torque \cite{MD}; compare \cite{peale}.

\nl
For a discussion of  this model, see
\cite{Celletti90};
for further references, see
\cite{Danby62}, \cite{Goldreich-Peale67}, \cite{Wisdom87_r}, and \cite{Celletti10_libro};
for a different (PDE) model, see \cite{BH1}.

%

 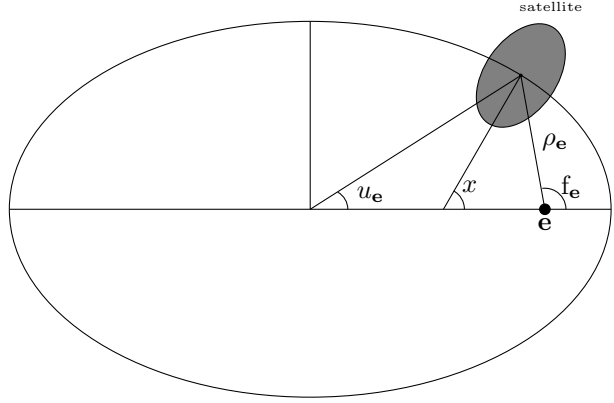
\begin{figure}
\centering
\begin{tikzpicture}

\draw (0,0) ellipse (4cm and 2.5cm);
\filldraw[fill opacity=0.5, fill=black, rotate around={-125:(2.8,1.78)}] (2.8,1.78) ellipse (0.77 cm and 0.48 cm);


\draw[-] (-4,0) -- (4,0) node[right] {$$} coordinate (x axis);
\draw[-] (0,0) -- (0,2.5) node[midway, left] {$$} coordinate (x axis);
\draw[-] (2.8,1.78) -- (3.12,0) node[midway, right] {$ \rho_{\textbf{e}}$};
\draw[-] (2.8,1.78)  -- (1.771,0);

\draw[-] (2.8,1.78)  -- (0,0);

\filldraw (3.12,0) circle (2pt) node[below] {$ \textbf{e} $}
(2.8,1.78) circle (0.5pt) node[below] {$$};


\draw  (3.4,0) arc (0:97:0.28cm);
\draw  (2.05,0) arc (0:61:0.28cm);

\draw  (0.5,0) arc (0:61:0.26cm);
\draw (0.8,0.2) node {$u_\ee$};

\draw (3.47,0.3) node {$\textrm{f}_{\textbf{e}}$};
\draw (2.12,0.3) node {$x$};
\draw (3.2,2.7) node {$\tiny{\textrm{satellite}}$};



\end{tikzpicture}

\caption{\footnotesize Triaxial satellite revolving on a rescaled Keplerian ellipse (equatorial section)}
\end{figure}

\Giu
The differential equation governing the motion of the satellite  is then given by
\begin{equation}
\label{eqn1}
\ddot{x} + {\eta} (\dot{x} - {\nu})+{\varepsilon}f_x(x,t)=0 \ ,
\end{equation}
where:
\begin{enumerate}
 \item[(a)] $x$ is the angle (mod $2\p$) formed by the direction of (say) the major
 equatorial axis of the satellite  with the  direction of the semi--major axis of the Keplerian ellipse plane;   `dot' represents   derivative with respect to $t$ where
 $t$ (also defined mod $2\p$) is the mean anomaly (i.e.,  the  ellipse area
 between the semi--major axis and
 the orbital radius $\rho_{\ee}$ divided by the total  area times $2\p$)
 and $\ee$ is the eccentricity of the ellipse;

\item[(b)]  the dissipation  parameters ${\eta} = K \Omega_\ee$ and ${\nu}=\nu_\ee$
 are real-analytic functions of the eccentricity $\ee$: $K \geq 0$ is a physical constant depending on the internal (non-rigid) structure of the satellite and\footnote{In \cite{L} (see Eq.ns 2)
$\Omega_\ee$ and $N_\ee$ are denoted, respectively, $\Omega(e)$ and $N(e)$, while, in \cite{peale}, they  are denoted, respectively, by $f_1(e)$ and $f_2(e)$.}
\begin{eqnarray} \label{nu}
\Omega_\ee &:=& \left( 1 + 3\ee^2 + \frac{3}{8}\ee^4 \right)\frac{1}{\left( 1-\ee^2\right)^{9/2} },\nonumber\\
N_\ee &:=& \left( 1 + \frac{15}{2}\ee^2 + \frac{45}{8}\ee^4 + \frac{5}{16}\ee^6 \right)  \frac{1}{\left( 1-\ee^2\right)^{6} },\nonumber\\
\nu_\ee&:=& \frac{N_\ee}{\Omega_\ee}\ .
\end{eqnarray}

\item[(c)]  the constant ${\varepsilon}$
measures the {\sl oblateness} (or ``equatorial ellipticity'') of the satellite and it is defined
as $\e=\frac32\, \frac{B-A}C,$ where  $A\le B$ and $C$  are the principal moments of inertia
of the satellite ($C$ being referred to the polar axis);

\item[(d)]  the function $f$ is the (``dimensionless'') Newtonian potential   given by
\begin{equation}
\label{def_f}
f(x,t):=-\frac{1}{2 \rho_\ee(t)^3} \cos(2x-2{\rm f}_\ee(t)),
\end{equation}
where
$\rho_\ee(t)$ and ${\rm f}_\ee(t)$ are, respectively, the (normalized) orbital radius
\begin{equation}
\label{def_rho} \rho_\ee(t):=1-\ee \cos(u_\ee(t))
\end{equation}
and the polar angle (see\footnote{The analytic expression of the true anomaly in terms of the eccentric anomaly is given by ${\rm f}_\ee(t)= 2 \arctan \left( \sqrt{\frac{1+\ee}{1-\ee}}
\tan\left( \frac{u_\ee(t)}{2}\right) \right)$.} Figure~1);
the eccentric anomaly $u=u_\ee(t)$ is defined implicitly
by the Kepler
equation\footnote{As  well known (see \cite{Wintner41})
$\ee\to u_\ee(t)$ is, for every $t \in \R$, holomorphic for
$|\ee|< r_{\star}$, with
$\dst
r_\star := \max_{y \in \R} \frac{y}{\cosh(y)} = \frac{y_\star}{\cosh(y_\star)} =  0.6627434\cdots  \textrm{ and  } y_\star = 1.1996786\cdots .$
}
\begin{equation}
\label{kepler_equation}
t=u-\ee \sin(u).
\end{equation}
Notice that the Newtonian potential $f(x,t)$ is a doubly--periodic function of $x$ and $t$, with periods $2\p$.
\end{enumerate}

\noindent
{\bf Remarks:}
\begin{itemize}
\item[(i)]
The principal moments of an ellipsoid of mass $m$ and with axes
$a$, $b$ and $c$ are given by
$$
A = \frac{1}{5} m (b^2 + c^2),\qquad B = \frac{1}{5} m (a^2 + c^2),
\qquad C
= \frac{1}{5} m (a^2 + b^2).
$$
The oblateness   $\varepsilon$ is then given by
\begin{equation}
\label{oblateness}
\varepsilon = \frac{3}{2} \frac{B-A}{C} = \frac{3}{2} \frac{a^2 - b^2}{a^2 + b^2} .
\end{equation}

\item[(ii)]
There is no universally accepted determination of the internal rigidity constant $K$ for most satellites of the Solar system\footnote{See, however:\\
Iess, L.; et al. (2012). \textit{The tides of Titan}. Science, 337(6093):457-9;
\\
Hussmann, H., Sohl, F., and Spohn, T.. \textit{Subsurface oceans and deep interiors of medium-sized outer planet satellites and large trans-neptunian objects}. Icarus 185, pp. 258-273,  (2006);
\\
Lainey, V.; et al.. \textit{Strong Tidal Dissipation in Saturn and Constraints on Enceladus' Thermal State from Astrometry}. The Astrophysical Journal, Volume 752, Issue 1, article id. 14, 19 pp. (2012); \\
Castillo-Rogez, J. C., Efroimsky, M. and Lainey, V.;
\textit{The tidal history of Iapetus: Spin dynamics in the light of a refined dissipation model}. Journal of geophysical research, Vol. 116, E09008, doi:10.1029/2010JE003664, 2011.}.
For the Moon and Mercury an accepted value is $\sim 10^{-8}$; see, e.g., \cite{Celletti90}.
However, for our analysis to hold it will be enough that
$\eta\le 0.008$ for the moons and
$\eta\le 0.001$ for Mercury.
\end{itemize}

\nl
The known physical parameter values of the eighteen moons
of the Solar system needed for our analysis are reported in the following table\footnote{$a\ge b$ denote the maximal and minimal observed equatorial radii, which, in our model, are assumed to be the axes of the  ellipse modeling the equatorial section of the satellite; the dimensions of the polar radius are not relevant in our model, however, for all the cases considered in this paper it turns out to be always smaller or equal than the smallest equatorial radius.}:

\Giu

\centerline{
{\small\addtolength{\tabcolsep}{-5pt}
\begin{tabular}{|c|l|c|c|c|c|c|}
\hline
\textbf{Principal}&\textbf{\ \ Satellite\ }&\textbf{Eccentricity}& $a$ & $b$ &  \textbf{Oblateness}  & $\nu$ \\
\textbf{body}& &\textbf{e} & \textbf{(km)}  & \textbf{(km)}  & $\varepsilon = \frac{3}{2} \frac{a^2-b^2}{a^2+b^2} $&   \\
\hline
Earth & Moon$^{\dagger}$ & $0.0549$ & $1740.19$ & $1737.31$ &  $0.00248454179$ & $1.018088056$ \\
\hline
 Jupiter & $\textrm{Io}^{\clubsuit}$ & $0.0041$ & $1829.7 $ & $1819.2 $  &  $ 0.00863266715$ & $1.00010086$ \\
  \hline
 & $\textrm{Europa}$ & $0.0094$ & $1561.3 $ & $1560.3 $  &  $ 0.00096104552$ & $1.000530163$ \\
 \hline
 & $\textrm{Ganymede}$ & $0.0011$ & $ 2632.9$ & $ 2629.5$ &  $ 0.0019382783$ & $1.00000726$ \\
 \hline
 & $\textrm{Callisto}$ & $0.0074$ & $ 2411.8$ & $2408.8 $  &  $ 0.00186698679$& $1.000328561$ \\
 \hline
Saturn & Mimas$^{\spadesuit}$ & $0.0193$ & $208.3 $ & $196.2 $  &  $0.08966019091$ & $1.002234993$ \\
 \hline
 & Enceladus$^{\spadesuit}$ & $0.0047$ & $ 257.2$ & $251.2 $  &  $0.03540026218$ & $1.00013254$ \\
  \hline
& Tethys$^{\spadesuit}$ & $0.0001$ & $ 538.7$ & $527.0 $  &  $0.03293212897$ & $1.00000006$ \\
  \hline
& Dione$^{\spadesuit}$ & $0.0022$ & $ 564.0$ & $560.8 $  &  $0.00853478156$ & $1.00002904$ \\
  \hline
& Rhea$^{\spadesuit}$ & $0.001$ & $ 766.8$ & $761.8 $  &  $0.0098127957$ & $1.000006$ \\
  \hline
& Titan$^{\triangledown}$ & $0.0288$ & $2575.239  $ & $  2574.932$  &  $0.00017882901$ & $1.00497691$ \\
  \hline
& Iapetus$^{\spadesuit}$ & $0.0283$ & $ 748.9 $ & $ 743.1 $  &  $0.011662022156$ & $1.004805592$ \\
  \hline
Uranus & Ariel$^{\heartsuit}$ & $0.0012$ & $ 582.0$ & $577.3 $  &  $ 0.012162311957$ & $1.00000864$ \\
  \hline
& Umbriel$^{\heartsuit}$ & $0.0039$ & $ 587.5 $ & $ 581.9 $  &   $0.01436601227$ & $1.00009126$ \\
  \hline
& Titania$^{\heartsuit}$ & $0.0011$ & $ 790.7$ & $787.1 $  &  $0.00684493838$ & $1.00000726$ \\
  \hline
& Oberon$^{\heartsuit}$ & $0.0014$ & $ 764.0 $ & $ 758.8$  &  $0.01024416739$ & $1.00001176$ \\
  \hline
& Miranda$^{\heartsuit}$ & $0.0013$ & $ 241.0 $ & $233.3 $  &  $0.04869051956$ & $1.00001014$ \\
  \hline
Pluto & Charon$^{\intercal}$ & $0.0022$ & $605.0 $ & $602.2 $  & $0.00695821306 $ & $1.00002904$ \\
  \hline
\end{tabular}
}
}
\nopagebreak

\Giu
\centerline{
{\footnotesize  {\bf Table 1.}
Physical data of the moons in 1:1 spin--orbit resonance}}
\nl
{\footnotesize
\centerline{\url{http://ssd.jpl.nasa.gov/?sat_phys_par} and
\url{http://ssd.jpl.nasa.gov/?sat_elem}
}
\nopagebreak
\nl
$^{\dagger}$: Runcorn, S. K.; Hofmann, S.
 Proceedings
from IAU Symposium no. $47$,
Dordrecht, Reidel (1972).
\\
$\clubsuit$: Thomas, P. C.; et al.  Icarus 135 (1998).
\\
$^{\heartsuit}$: Thomas, P. C.;  Icarus 73 (1988).
\\
$^{\spadesuit}$: Dougherty, M.K.; al. (eds.)
 DOI $10.1007/978$-$1$-$4020$-$9217$-$6$\_$24$,
 (2009).
\\
$^{\triangledown}$: Iess, L.; et al.
 Science 327 (2010).
\\
$^{\intercal}$: Sicardy, B., et al.
 Nature 439 (2006) \\
}

\Giu

\nl
The corresponding data of Mercury  are:

\Giu

\centerline{
{\small\addtolength{\tabcolsep}{-5pt}
\begin{tabular}{|c|l|c|c|c|c|c|}
\hline
\textbf{Principal}&\textbf{Satellite}&\textbf{Eccentricity}& $a$ & $b$ & \textbf{Oblateness}  & $\nu$ \\
\textbf{body}& &\textbf{e} & \textbf{(km)}  & \textbf{(km)}  & $\varepsilon = \frac{3}{2} \frac{a^2-b^2}{a^2+b^2} $&   \\
\hline
Sun & Mercury & $0.2056$ & $2440.7$ & $2439.7$ & $0.00061470369$ & $1.255835458$ \\
\hline
\end{tabular}
}
}
\nopagebreak
\Giu
\centerline{{\footnotesize  {\bf Table 2.}
Physical data of Mercury (3:2 spin--orbit resonance)}}
\nopagebreak
\centerline{
{\scriptsize
{\url{http://nssdc.gsfc.nasa.gov/planetary/factsheet/mercuryfact.html} 
}}}
\centerline{
{\scriptsize
{
\url{http://solarsystem.nasa.gov/planets/charchart.cfm}
}}}

\Giu

\noindent
{$\bullet$} {\it Existence Theorem for Solar System spin--orbit resonances}

\nl
In this framework, a  $p$:$q$ {\sl spin--orbit  resonance} (with $p$ and $q$   co--prime non--vanishing integers) is, by definition, a solution $t\in \real\to x(t)\in \real$ of \equ{eqn1}
such that
\begin{equation}\label{tommasone}
    x(t+ 2 \pi q) = x(t) + 2 \pi p\ ;
\end{equation}
indeed, for such orbits, after $q$  revolutions of the orbital radius,
 $x$ has made $p$ complete rotations\footnote{Of course,
 in physical space, $x$ and $t$  being angles,
 are defined modulus $2\p$, but to keep track of the topology (windings and rotations) one needs to consider  them in the universal cover $\real$ of $\real/(2\p\integer)$.}.

\nl
Our main result can, now, be stated as follows

\nl

\Giu
{\bf Theorem}
{\rm [moons]}{\sl
The differential equation \equ{eqn1} {\rm (a)$\div$(d)} admits spin--orbit   resonances   \equ{tommasone} with $p=q=1$  provided
{\bf e}, $\nu$
and $\e$ are as in Table 1 and $0\le \eta\le 0.008$.

\nl
{\rm [Mercury]}
The differential equation \equ{eqn1} {\rm (a)$\div$(d)} admits spin--orbit   resonances   \equ{tommasone} with $p=3$ and $q=2$  provided
{\bf e}, $\nu$ and $\e$ are as in Table 2 and $0\le \eta\le 0.001$.
}

\Giu
In \cite{BC09} (compare Theorem~1.2)
 existence of spin--orbit
resonances with $q=1,2,4$ and any $p$ (co--prime with $q$) is proved\footnote{The procedure consisting in reducing the problem
to a fixed point one containing parameters:
the question is then solved by a  Lyapunov--Schmidt
or ``range--bifurcation'' decomposition.
The ``range equation'' is solved by standard contraction mapping methods,
but in order of the fixed point to
correspond to a true solution of the original problem a compatibility
(zero--mean) condition has to be satisfied
(``the bifurcation equation'')
and this is done exploiting a free parameter by means of a topological
argument.}, while in 
 \cite{CC09} quasi--periodic solutions,
corresponding to $p/q$ irrational are studied in the same model.
In  \cite{BC09}
no explicit computations of  constants
(size of admissible $\e$, size of admissible $\eta$, ...)
have been carried out.

\nl
The main point of this paper  is to compute all constants explicitly in order to get nearly 
optimal estimates  and include {\sl all} cases of physical interest.

\section{Proof of the theorem}

\subsubsection*{Step 1. Reformulation of the problem of finding spin--orbit resonances}
Let $x(t)$ be  a $p$:$q$ spin--orbit resonance and let $u(t):= x(qt)-pt-\xi$. Then, by \equ{tommasone} and choosing $\xi$ suitably one sees immediately that  $u$ is $2\p$--periodic and satisfies the differential equation
\beq{u''}
u''(t)+\etar  \big( u'(t)-\nur  \big)
+\epsr  f_x\big(\xi+ pt +u(t), qt\big)=0\ ,\qquad \langle u\rangle = 0,
\eeq
where $\langle \cdot\rangle$ denotes the average over the period\footnote{The parameter $\x$ 
is given by $(1/2\p) \int_0^{2\p} \big( x(qt)- pt\big) dt$
and will be our ``bifurcation parameter''.} and
\begin{equation}\label{etae}
\etar:= q  \eta\,,\qquad
\nur := q \nu -p\,,\qquad
\epsr := q^2  \e\,.
\end{equation}
%
Separating the linear part from the non--linear one, we can rewrite \equ{u''} as follows: let
\beq{defLPhi}
\left\{
\begin{array}l
Lu:=
u''+\etar   u'\\ \ \\
\Big[\Phi_\xi(u)\Big](t) :=
\etar\nur
-\epsr  f_x\big(\xi+ pt +u(t), qt\big)
\end{array}\right.
\eeq
then, the differential equation in \equ{u''} is equivalent to
\beq{laura}
Lu=\Phi_\xi(u)\ .
\eeq

\subsubsection*{Step 2. The Green operator ${\cal G}=L^{-1}$}
Let $C^k_{\rm per}$ be the Banach space of $2\pi$--periodic $C^k(\real)$ functions
endowed with the $C^k$--norm\footnote{$\dst \|v\|_{C^k}:=  \sup_{0\le j\le k} \sup_{t\in\real}|D^j v(t)|$.};  let
$\ck{k}$ be  the closed subspace of $C^k_{\rm per}$  formed by functions with vanishing average over $[0,2\pi]$; finally,   denote by
$\bB:=\ck{0}$ the Banach space of $2\p$--periodic continuous functions with zero average (endowed with the sup--norm).

\nl
The linear operator $L$ defined in (\ref{defLPhi}) maps injectively $C^2_{\rm per, 0}$ onto $\bB$; the inverse operator (the ``Green operator'')  ${\cal G}=L^{-1}$
is a bounded linear isomorphism.  
Indeed, the following elementary Lemma holds:

\begin{lem}\label{green}
Let $\etar<2/\pi.$ Then\footnote{$\dst \|{\cal G}\|_{L(\bB,\bB)}=\sup_{u: \|u\|_{C^0}=1} \|{\cal G}(u)\|_{C^0}$. }
$$
\|{\cal G}\|_{L(\bB,\bB)}
\leq
\Big(1+\etar\frac{\pi}{2}\big(1-\etar\frac{\pi}{2}\big)^{-1}\Big) \frac{\pi^2}{8}\,.
$$
\end{lem}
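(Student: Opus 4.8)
The plan is to factor $L$ through the pure second derivative and treat the first--order term perturbatively via a Neumann series. Write $L_0 u:=u''$ and $Du:=u'$, both acting on $\ck2$. On zero--average functions $L_0$ is the Fourier multiplier with symbol $-n^2$, hence a bijection $\ck2\to\bB$ with bounded inverse $L_0^{-1}$; since $D$ and $L_0^{-1}$ are both Fourier multipliers there, they commute and ${\cal I}:=DL_0^{-1}=L_0^{-1}D$ is a well--defined operator $\bB\to\bB$ (it sends $g$ to its unique zero--average $2\pi$--periodic primitive; note in particular $L_0{\cal I}=D$). Because $L=L_0+\etar D=L_0(\Id+\etar{\cal I})$, I would set $\tilde{\cal G}:=(\Id+\etar{\cal I})^{-1}L_0^{-1}$ and check directly, using $L_0{\cal I}=D$, that $L\tilde{\cal G}=\Id$ on $\bB$, so that ${\cal G}=\tilde{\cal G}$. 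Consequently
\[
\|{\cal G}\|_{L(\bB,\bB)}\le\|(\Id+\etar{\cal I})^{-1}\|_{L(\bB,\bB)}\,\|L_0^{-1}\|_{L(\bB,\bB)},
\]
and whenever $\etar\|{\cal I}\|_{L(\bB,\bB)}<1$ the first factor is bounded by $(1-\etar\|{\cal I}\|)^{-1}$ through the Neumann expansion of $(\Id+\etar{\cal I})^{-1}$.

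It then remains to compute the two elementary norms. For ${\cal I}$ I would use its explicit convolution kernel $\frac1{2\pi}(\pi-\theta)$, $\theta:=(t-s)\bmod 2\pi\in[0,2\pi)$, read off from the Fourier series $\sum_{n\ne0}(\ii n)^{-1}e^{\ii n\theta}=\pi-\theta$; this gives $\|{\cal I}\|_{L(\bB,\bB)}\le\frac1{2\pi}\int_0^{2\pi}|\pi-\theta|\d\theta=\frac\pi2$, so that the smallness hypothesis $\etar\|{\cal I}\|<1$ is exactly the assumption $\etar<2/\pi$. For $L_0^{-1}$ I would similarly write its convolution kernel
\[
G_0(\theta)=-\tfrac\pi6+\tfrac\theta2-\tfrac{\theta^2}{4\pi},\qquad L_0^{-1}g(t)=\int_0^{2\pi}G_0\big((t-s)\bmod 2\pi\big)\,g(s)\d s,
\]
obtained from $\sum_{n\ge1}n^{-2}\cos n\theta=\frac{\pi^2}6-\frac{\pi\theta}2+\frac{\theta^2}4$ on $[0,2\pi]$.

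The one genuinely delicate point is the sharp constant $\frac{\pi^2}8$: the naive operator bound $\sup_t\int_0^{2\pi}|G_0|\d s=\frac{2\pi^2}{9\sqrt3}$ is strictly larger than $\frac{\pi^2}8$, and the submultiplicative estimate $\|{\cal I}\|^2=\frac{\pi^2}4$ (recall $L_0^{-1}={\cal I}^2$) is worse still. The improvement comes from exploiting $g\in\bB$: since $\int_0^{2\pi}g=0$, one may subtract an arbitrary constant $c$ from the kernel without changing $L_0^{-1}g$, so that
\[
\|L_0^{-1}\|_{L(\bB,\bB)}\le\inf_{c\in\real}\int_0^{2\pi}|G_0(\theta)-c|\d\theta .
\]
Minimizing over $c$ (the optimum is the median value $c=\pi/48$, for which $G_0-c$ vanishes precisely at $\theta=\pi/2,3\pi/2$) and using the symmetry of $G_0$ about $\theta=\pi$, an explicit integration yields exactly $\frac{\pi^2}8$. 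Combining the three estimates gives
\[
\|{\cal G}\|_{L(\bB,\bB)}\le\frac{\pi^2/8}{1-\etar\pi/2}=\Big(1+\etar\tfrac\pi2\big(1-\etar\tfrac\pi2\big)^{-1}\Big)\tfrac{\pi^2}8,
\]
which is the claim. The main obstacle is thus not the perturbative step but extracting this optimal constant: one must use that the forcing has zero average, as neither $\sup_t\int|G_0|$ nor submultiplicativity is good enough.
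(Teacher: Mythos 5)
Your proof is correct, and it takes a genuinely different route from the paper's. The paper first establishes two sharp Wirtinger--type inequalities for zero--average periodic functions (Lemma~\ref{pesto}, proved in Appendix~A by an elementary comparison/contradiction argument) and then runs a two--step ODE perturbation: setting $v=u'$, from $v'=-\etar v+g$ it gets $\|u'\|_{C^0}\le\big(1-\etar\frac\pi2\big)^{-1}\frac\pi2$, and then from $u''=-\etar u'+g$ the claim. Your factorization $L=L_0(\Id+\etar{\cal I})$ inverted by a Neumann series is the same perturbative mechanism in operator form---both arguments produce exactly the factor $\big(1-\etar\frac\pi2\big)^{-1}$, and your bound coincides with the paper's because $1+x(1-x)^{-1}=(1-x)^{-1}$---so the genuine difference lies in how the constants $\frac\pi2$ and $\frac{\pi^2}8$ are obtained: where the paper invokes Lemma~\ref{pesto}, you compute the convolution kernels of ${\cal I}$ and $L_0^{-1}$ explicitly and, for $L_0^{-1}$, exploit the zero average of the datum through the median--subtraction device $\inf_{c}\int_0^{2\pi}|G_0(\theta)-c|\d\theta=\frac{\pi^2}8$ (attained at $c=\pi/48$; and you are right that the naive bound $\int_0^{2\pi}|G_0(\theta)|\d\theta=\frac{2\pi^2}{9\sqrt3}$ is strictly too big). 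Your computations check out, and they in fact re--prove Lemma~\ref{pesto}, since \equ{bergoglio} and \equ{bergogliobis} follow by applying ${\cal I}$ to $v'$ and $L_0^{-1}$ to $v''$; thus your Fourier/Green--function argument could replace Appendix~A altogether. What your route buys is a systematic, computational scheme that makes transparent that the zero--average hypothesis is precisely what yields the sharp constant $\frac{\pi^2}8$; what the paper's route buys is complete elementarity (no Fourier series, no kernels). One small point you should tighten: before concluding ${\cal G}=\tilde{\cal G}$ from $L\tilde{\cal G}=\Id$ you must know that $\tilde{\cal G}g\in\ck{2}$, so that $L$ may be applied to it; this follows from the bootstrap $\tilde{\cal G}g=L_0^{-1}g-\etar{\cal I}\tilde{\cal G}g$ (each application of ${\cal I}$ gains one derivative), or, more simply, by applying $(\Id+\etar{\cal I})^{-1}L_0^{-1}$ to the identity $L_0(\Id+\etar{\cal I})u=g$ satisfied by $u={\cal G}g$, whose existence and uniqueness the paper has already established before stating the lemma.
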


\nl
In particular, assuming
\begin{equation}\label{norm green condition}
\etar \leq \frac{\pi}{5}\left( \frac{10}{\pi^2}-1 \right)  , \ {\rm i.e.} \ ,\ \
\eta\le
\left\{
\begin{array}{ll}
\frac{\pi}{5}\left( \frac{10}{\pi^2}-1 \right)=0.0083\cdots \ , & {\rm if}\ (p,q)=(1,1) \\  \ \\
\frac{\pi}{10}\left( \frac{10}{\pi^2}-1 \right)=0.0041\cdots\ ,& {\rm if}\ (p,q)=(3,2)
\end{array}
\right.
\end{equation}
one gets
\beq{bound on G}
\|{\cal G}\|_{L(\bB,\bB)}
\leq \frac54\,.
\eeq
The proof of the above lemma is based on the
following elementary result, whose proof is given
in\footnote{It is easy to see that the estimates in
Lemma~\ref{pesto} are sharp.}   Appendix~A.
\begin{lem}\label{pesto}
\begin{eqnarray}\label{bergoglio}
&&\qquad\qquad v\in
\ck{1}
\qquad\Longrightarrow\qquad
\|v\|_{C^0}\leq \frac{\pi}{2}\|v'\|_{C^0}
\\
\label{bergogliobis}
&&\qquad\qquad
v\in \ck{2}
\qquad\Longrightarrow\qquad
\|v\|_{C^0}\leq \frac{\pi^2}{8}\|v''\|_{C^0}
\end{eqnarray}
\end{lem}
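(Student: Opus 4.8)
The statement to prove is Lemma~\ref{pesto}: two Poincar\'e--Wirtinger--type inequalities for zero-average periodic functions. Let me sketch the proof.

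The plan is to exploit the zero-average condition together with periodicity to bound the sup-norm of $v$ by the sup-norm of its derivative. For the first inequality (\ref{bergoglio}), since $v\in\ck{1}$ has zero average, the function $v$ must vanish somewhere: indeed $\int_0^{2\pi} v\,dt = 0$ forces $v$ to change sign (unless identically zero), so there is a point $t_0$ with $v(t_0)=0$. Then for any $t$, writing $v(t)=\int_{t_0}^{t} v'(s)\,ds$, one bounds $|v(t)|$ by the length of the shortest arc from $t_0$ to $t$ times $\|v'\|_{C^0}$. Because $v$ is $2\pi$-periodic, one can always reach $t$ from $t_0$ going either forward or backward, so the relevant arc length is at most $\pi$. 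A first crude estimate gives $|v(t)|\le \pi\|v'\|_{C^0}$, but the sharp constant $\pi/2$ requires a more careful argument.

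The sharp constant comes from using the zero of $v$ more efficiently. The cleanest route I would take: since $v$ is continuous, periodic and has zero mean, there exist points $t_0$ and $t_1$ where $v$ attains its minimum and maximum; at an interior extremum $v'$ vanishes or, better, one uses that between a sign change and the maximum the total variation accumulated must reconcile with the zero-average constraint. Concretely, I would argue that $v$ has \emph{at least two} zeros in a period (it must cross zero an even number of times, hence at least twice, going up and coming down). Call two consecutive zeros $t_0<t_1$ with $t_1-t_0\le\pi$ or $2\pi-(t_1-t_0)\le\pi$ — one of the two arcs between consecutive zeros has length at most $\pi$. On that short arc $[t_0,t_1]$ of length $\ell\le\pi$, for any $t$ in it, $|v(t)|=|\int_{t_0}^t v'|\le (t-t_0)\|v'\|$ and also $|v(t)|=|\int_t^{t_1} v'|\le (t_1-t)\|v'\|$; taking the minimum of the two bounds and optimizing over $t$ yields $|v(t)|\le \tfrac{\ell}{2}\|v'\|_{C^0}\le \tfrac{\pi}{2}\|v'\|_{C^0}$. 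The maximum of $|v|$ over all of $\real$ is attained inside one such short arc, giving (\ref{bergoglio}).

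For the second inequality (\ref{bergogliobis}), I would iterate: apply (\ref{bergoglio}) to $v$, noting that $v'\in\ck{1}$ as well, since $\int_0^{2\pi} v'\,dt = v(2\pi)-v(0)=0$ by periodicity, so $v'$ also has zero average. Hence $\|v\|_{C^0}\le \tfrac{\pi}{2}\|v'\|_{C^0}$ and $\|v'\|_{C^0}\le\tfrac{\pi}{2}\|v''\|_{C^0}$, and composing gives $\|v\|_{C^0}\le \tfrac{\pi^2}{4}\|v''\|_{C^0}$. However, this only yields the constant $\pi^2/4$, not the sharper $\pi^2/8$ claimed. The main obstacle is precisely recovering the factor of two improvement: naive iteration loses it. To get $\pi^2/8$ one must treat the second-order problem directly rather than iterating. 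I would instead use the explicit Green's function for the operator $v\mapsto v''$ on $\ck{2}$ (periodic, zero-average), or equivalently represent $v(t)$ via a double integration of $v''$ against a kernel that already incorporates both zero-average constraints simultaneously, and then maximize the resulting kernel's $L^1$-norm in $s$ over $t$. The extremal kernel is piecewise quadratic and its sup over $t$ of the $L^1$ norm works out to $\pi^2/8$; the sharpness (attained by a suitable piecewise-parabolic $v$ with $v''=\pm\|v''\|$) confirms the constant. Since the paper defers the detailed computation to Appendix~A and remarks the estimates are sharp, I would present the one-dimensional kernel computation there and simply cite the resulting optimal constant here.
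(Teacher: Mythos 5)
Your strategy for \equ{bergoglio} breaks down at the step ``the maximum of $|v|$ over all of $\real$ is attained inside one such short arc''. The zeros of $v$ split the period into arcs on which $v$ has constant sign, but nothing forces the point where $|v|$ is maximal to lie in an arc of length at most $\pi$, and in general it does not. Concretely, let $v$ be (a $C^1$ smoothing of) the following function: on an arc of length $\pi+\delta$ it is strictly negative, equal to $-\epsilon$ except for one triangular dip of depth $1$ with slopes $\pm 1$; on the complementary arc of length $\pi-\delta$ it is a positive trapezoidal bump of height $h'\approx 0.4$, tuned so that $\langle v\rangle=0$. Then $\|v'\|_{C^0}\le 1$, and the maximum of $|v|$, namely $1$, is attained inside the arc of length $\pi+\delta>\pi$, where your two--sided estimate only yields $|v|\le \frac{\pi+\delta}{2}\|v'\|_{C^0}$, weaker than the claim. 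The conclusion still holds for such $v$, but only because of a mechanism your argument never invokes: when the minimum is $-c$, the tent bound forces an area at least $c^2$ below zero around the minimum, which must be compensated on the complementary arc, where the area above zero is at most $(\pi-c)^2$; zero average then forces $c\le \pi/2$. This quantitative use of $\langle v\rangle=0$ (not merely the existence of zeros) is exactly how the paper proves \equ{bergoglio}, by contradiction: if $c>\pi/2$, then $\int_0^{2\pi}v\le (\pi-c)^2-c^2=\pi(\pi-2c)<0$.

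For \equ{bergogliobis} you rightly note that iterating \equ{bergoglio} only gives $\pi^2/4$, but the kernel computation you then appeal to does not ``work out to $\pi^2/8$'' as stated. The periodic Green kernel of $d^2/dt^2$ on zero--average functions is $k(x)=\frac{\pi^2}{6}-\frac{(x-\pi)^2}{2}$ for $0\le x\le 2\pi$ (a Bernoulli polynomial), and its normalized $L^1$ norm is $\frac{1}{2\pi}\int_0^{2\pi}|k|\,dx=\frac{2\pi^2}{9\sqrt{3}}\approx 1.266>\frac{\pi^2}{8}\approx 1.234$, so the naive bound $\|v\|_{C^0}\le\frac{1}{2\pi}\|k\|_{L^1}\|v''\|_{C^0}$ misses the claimed constant. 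The missing idea is that $\langle v''\rangle=0$ (automatic by periodicity), so $k$ may be replaced by $k+c$ for any constant $c$, and one must minimize over $c$: the optimum is $c=-\pi^2/24$, giving the kernel $\frac{\pi^2}{8}-\frac{(x-\pi)^2}{2}$, which changes sign at $|x-\pi|=\pi/2$ and whose normalized $L^1$ norm equals exactly $\frac{\pi^2}{8}$. Your phrase about a kernel ``incorporating both zero--average constraints'' gestures in this direction, but without identifying and performing this optimal shift the method provably yields a strictly larger constant, so the sketch does not establish the lemma. (The paper instead argues by contradiction, comparing $v$ near its extremum with explicit parabolas and once more exploiting the zero average through an area estimate; no Fourier or kernel machinery is used.)
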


\noindent
{\bf Proof of Lemma \ref{green}}
Given $g\in\bB$ with $\|g\|_{C^0}=1$
we have to prove that if $u\in\ck{2}$ is the unique
solution of
$u''+\etar u'=g$ with $\langle u\rangle=0$,  then
\begin{equation}\label{zucca}
\|u\|_{C^0}
\leq
\Big(1+\etar\frac{\pi}{2}\big(1-\etar\frac{\pi}{2}\big)^{-1}\Big) \frac{\pi^2}{8}\,.
\end{equation}
We note that, setting $v:=u',$ we have that
$v\in\bB$ and
$v'=-\etar v+g$. Then we get
$$
\|v\|_{C^0}
\stackrel{\eqref{bergoglio}}\leq
\frac{\pi}{2}\|-\etar v+g\|_{C^0}
\leq \frac{\pi}{2} (\etar\|v\|_{C^0}+1)\,,
$$
which implies
\begin{equation}\label{pajata}
\|u'\|_{C^0}=\|v\|_{C^0}
\leq
\Big(1-\frac{\pi}{2}\etar\Big)^{-1} \frac{\pi}{2}\,.
\end{equation}
Since $u''=-\etar u' +g$, we have
$$
\|u\|_{C^0}
\stackrel{\eqref{bergogliobis}}\leq
\frac{\pi^2}{8}\|-\etar u'+g\|_{C^0}
\leq
\frac{\pi^2}{8} (1+\etar \|u'\|_{C^0})
$$
and \eqref{zucca} follows by \eqref{pajata}. \eproof

\subsubsection*{Step 3. Lyapunov--Schmidt decomposition}

Solutions of  \equ{laura} are recognized as fixed points of the operator
${\cal G}\circ \Phi_\xi$:
\beq{lauraG}
u={\cal G}\circ \Phi_\xi(u)\ ,
\eeq
where $\xi$ appears as a parameter.

\nl
To solve   equation \equ{lauraG}, we shall perform a {\sl Lyapunov--Schmidt decomposition}.
Let us denote by
$\hat\Phi_\xi: C^0_{\rm per}\to \bB=\ck{0}$ the operator
\beqa{hatPhi}
\hat\Phi_\xi(u)&:=&\frac1\epsr \, \big[ \Phi_\xi(u) -\langle \Phi_\xi(u) \rangle\big]\\
&\phantom{:}=&
- f_x\big(\xi+ pt +u(t), qt\big)
+ \phi_u(\xi) ;
\nonumber\eeqa
where
\beq{defphi}
\phi_u(\xi) :=\frac{1}{2\pi}\int_0^{2\pi} f_x\big(\xi+ pt +u(t;\xi), qt \big)dt\ .
\eeq
Then, equation \equ{lauraG} can be splitted into a
 ``range equation''
\begin{equation}\label{range}
u =\epsr  {\cal G} \circ \hat\Phi_\xi(u)\,,
\end{equation}
(where $u=u(\cdot;\xi)$) and a ``bifurcation  (or kernel) equation''
\begin{equation}\label{kernel}
\phi_u(\xi)=\frac{\etar\nur}\epsr
\quad \Longleftrightarrow\quad
\langle \Phi_\xi\big(u(\cdot;\xi\big) \rangle=0
\,.
\end{equation}

\begin{rem} {\rm (i)}
If $(u,\xi)\in \bB\times[0,2\pi]$
solves
 \equ{range}{\rm \&}\equ{kernel},
then,  $x (t)$  solves \equ{eqn1}.

\noi
{\rm (ii)}
 $\forall \xi\in[0,2\pi]$, $\hat\Phi_\xi\in {C}^1(\bB,\bB)$; indeed, $\forall (u,\xi)\in \bB\times[0,2\pi]$,
\begin{equation}\label{hatPhi1}
\| \hat\Phi_\xi(u)\|_{{C}^0}\leq
 2\sup_{\mathbb{T}^2}|f_x|\,,\quad
\| D_u\hat\Phi_\xi\|_{\mathcal{L}(\bB,\bB)}\leq
 2\sup_{\mathbb{T}^2}|f_{xx}|\,.
\end{equation}
\erem

\nl
The usual way to proceed to solve \equ{range}\&\equ{kernel} is the following:
\begin{enumerate}
\item for any $\xi\in[0,2\pi]$, find $u=u(\cdot;{\xi})$ solving \equ{range};
\item insert $u=u(\cdot,\xi)$ into the kernel equation \equ{kernel} and determine $\xi\in[0,2\pi]$ so that \equ{kernel} holds.
\end{enumerate}

\subsubsection*{Step 4. Solving of the  range equation (contracting map method)}
For $\epsr$ small the range equation is easily solved by standard contraction arguments.

\nl
Let $R:=\frac52\epsr \sup_{\mathbb{T}^2}|f_x|$
and
let \beq{deff} \left\{\begin{array}l
\bB_{R}:=\big\{v\in\bB:\ \|v\|_{C^0}\le R\big\}
\ , \\ \   \\
\varphi:\ v\in \bB_{R}\to
\varphi(v):=\epsr {\cal G} \circ \hat \Phi_\xi(v).
\end{array}\right.
\eeq

\begin{pro}\label{pro:range}
Assume that $\etar$ satisfies \eqref{norm green condition}
and that
\begin{equation}\label{cozza}
\frac52\epsr \sup_{\mathbb{T}^2}|f_{xx}|<1\,.
\end{equation}
Then, for every $\x\in[0,2\p]$, there exists a unique $u:=u(\cdot;\x)\in \bB_{R}$ such that
$\vf(u)=u$.
\end{pro}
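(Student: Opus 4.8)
The plan is to apply the Banach fixed-point (contraction mapping) theorem to the map $\varphi$ of \eqref{deff} on the closed ball $\bB_{R}$. Since $\bB$ is a Banach space and $\bB_{R}$ is a closed (hence complete) subset of it, $\bB_{R}$ is a complete metric space under the $C^0$-distance; thus it suffices to check two things: that $\varphi$ maps $\bB_{R}$ into itself, and that $\varphi$ is a contraction on $\bB_{R}$. Existence and uniqueness of the fixed point $u=u(\cdot;\x)$ then follow immediately, for each fixed $\x\in[0,2\p]$.

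First I would verify the self-mapping property. For $v\in\bB_{R}$ the image $\varphi(v)=\epsr{\cal G}\circ\hat\Phi_\xi(v)$ lies in $\bB$, because ${\cal G}$ takes values in $\ck{2}\subset\bB$ (zero-average functions) and $\hat\Phi_\xi(v)\in\bB$. To bound its norm I combine the operator bound on the Green operator with the pointwise bound on $\hat\Phi_\xi$: assumption \eqref{norm green condition} guarantees, through Lemma~\ref{green} and \eqref{bound on G}, that $\|{\cal G}\|_{L(\bB,\bB)}\le\tfrac54$, while \eqref{hatPhi1} gives $\|\hat\Phi_\xi(v)\|_{C^0}\le 2\sup_{\mathbb{T}^2}|f_x|$. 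Hence
\[
\|\varphi(v)\|_{C^0}\le \epsr\,\|{\cal G}\|_{L(\bB,\bB)}\,\|\hat\Phi_\xi(v)\|_{C^0}\le \tfrac52\,\epsr\,\sup_{\mathbb{T}^2}|f_x|=R,
\]
so $\varphi(v)\in\bB_{R}$. The point to notice is that $R$ is defined precisely as this bound, so the self-map property holds exactly, with no margin to spare.

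Next I would show that $\varphi$ is a contraction. For $v_1,v_2\in\bB_{R}$, linearity of ${\cal G}$ and the same operator bound give $\|\varphi(v_1)-\varphi(v_2)\|_{C^0}\le \tfrac54\,\epsr\,\|\hat\Phi_\xi(v_1)-\hat\Phi_\xi(v_2)\|_{C^0}$. Since $\hat\Phi_\xi\in C^1(\bB,\bB)$ with $\|D_u\hat\Phi_\xi\|_{\mathcal{L}(\bB,\bB)}\le 2\sup_{\mathbb{T}^2}|f_{xx}|$ by \eqref{hatPhi1}, the mean value inequality along the segment joining $v_1$ and $v_2$ (which stays in the convex set $\bB_{R}$) yields $\|\hat\Phi_\xi(v_1)-\hat\Phi_\xi(v_2)\|_{C^0}\le 2\sup_{\mathbb{T}^2}|f_{xx}|\,\|v_1-v_2\|_{C^0}$. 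Combining,
\[
\|\varphi(v_1)-\varphi(v_2)\|_{C^0}\le \tfrac52\,\epsr\,\sup_{\mathbb{T}^2}|f_{xx}|\;\|v_1-v_2\|_{C^0},
\]
and the contraction constant is strictly less than $1$ exactly by hypothesis \eqref{cozza}.

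There is no genuinely hard step here: the argument is a standard contraction, and all the analytic content has already been isolated in Lemma~\ref{green} and in Remark~(ii). The only thing to be careful about is the bookkeeping of the single constant $\tfrac52\epsr$, which must simultaneously fix the radius $R$ (through $\sup|f_x|$) and control the Lipschitz constant (through $\sup|f_{xx}|$). Once the two conditions \eqref{norm green condition} and \eqref{cozza} are in force, the contraction mapping theorem delivers, for each fixed $\x\in[0,2\p]$, the unique $u=u(\cdot;\x)\in\bB_{R}$ with $\varphi(u)=u$, as claimed.
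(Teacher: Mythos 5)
Your proposal is correct and is essentially the paper's own proof, only written out in full: the self-map property follows from the Green operator bound \eqref{bound on G} (valid under \eqref{norm green condition}) combined with the first estimate in \eqref{hatPhi1}, the contraction property follows from the second estimate in \eqref{hatPhi1} together with hypothesis \eqref{cozza}, and the Banach fixed point theorem on the complete metric space $\bB_R$ concludes. The paper states exactly this argument in condensed form, so there is nothing to add or correct.
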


\proof
By \eqref{norm green condition} and  \equ{hatPhi1}
 the map $\vf$ in \equ{deff} maps $\bB_{R}$ into itself and
is a contraction with Lipschitz constant
smaller than 1 by \equ{cozza}.
The proof follows by the standard fixed point theorem.
\eproof

\nl
Recalling \equ{def_f}, \equ{def_rho} and
\eqref{etae}, the ``range condition'' \eqref{cozza} writes
\beq{range condition}
 \e
<\left\{
\begin{array}{ll}
\frac{(1-\ee)^3}{5}\ , & {\rm if}\ (p,q)=(1,1), \\  \ \\
 \frac{(1-\ee)^3}{20}\ ,& {\rm if}\ (p,q)=(3,2).
\end{array}
\right.
\eeq

\subsubsection*{Step 5. Solving the bifurcation equation \equ{kernel}}

\nl The function
$\phi_u(\xi)$  in \equ{defphi}
 can be written as
\beqa{phi}
\phi(\xi) &=&
\phi^{(0)}(\xi)+\epsr\tilde\phi^{(1)}_u(\xi;\epsr)
\eeqa
with
\begin{equation}\label{phi0}
\phi^{(0)}(\xi) :=
\frac{1}{2\pi}\int_0^{2\pi}f_x (\xi+pt,qt)dt\,.
\end{equation}
By \eqref{deff}, for $\e$ satisfying
\eqref{range condition},
\beq{esttphi1}
\sup_{
\xi\in[0,2\pi]
} |\tilde\phi^{(1)}_u| \le  \sup_{\mathbb{T}^2}|f_{xx}|
\frac{R}{\epsr}\le
\frac52 \big(\sup_{\mathbb{T}^2}|f_{x}|\big) \big( \sup_{\mathbb{T}^2}|f_{xx}|\big)\,.
\eeq
By\equ{def_f}, \equ{def_rho}, for $\e$ satisfying
\eqref{range condition}, one finds immediately that
\begin{equation}\label{M1}
\sup_{
 \xi\in[0,2\pi]
} |\tilde\phi^{(1)}_u|\le     M_1:=\frac{5}{(1-\ee)^6}\,.
\end{equation}
Let us, now,  have a closer look at the zero order part $\phi^{(0)}$.
The Newtonian potential $f$  has the Fourier expansion
\begin{equation}
\label{fourier_f}
f(x,t) = \sum_{j \in \Z, j \neq 0} \alpha_j \cos(2x - jt),
\end{equation}
where the Fourier coefficients $\a_j=\a_j(\ee)$ coincide with the Fourier coefficients of \begin{equation}
\label{definition_1}
G_\ee(t):= -\frac{e^{2i {\rm f}_\ee(t)}}{2\rho_e(t)^3}=
\sum_{j\in \integer,j\neq 0} \a_j \exp(ijt)\ ,
\end{equation}
(see Appendix~\ref{appB}).
Thus,
$$
f_x (\xi+pt,qt)=-2
\sum_{j\in{\mathbb Z},\ j\neq 0}\a_j
\sin(2\xi+(2p-jq)t)
$$
and one finds:
\beq{phi.so.nd}
\phi^{(0)}(\xi)
=\left\{
\begin{array}{ll}
-2\a_{2}\sin(2\xi)\ , & {\rm if}\ (p,q)=(1, 1), \\  \ \\
-2\a_{3}\sin(2\xi)\ ,& {\rm if}\ (p,q)=(3,2).\end{array}
\right.
\eeq
Define
\beq{apq}
a_{pq}:= \left\{
\begin{array}{ll}
2|\a_{2}|- \hat\e M_1\ , & {\rm if}\ (p,q)=(1,1), \\  \ \\
2|\a_{3}|- \hat\e M_1  \ ,& {\rm if}\ (p,q)=(3,2).\end{array}
\right.
\eeq
Then, from \equ{phi}, \equ{M1},  \equ{phi.so.nd} and \equ{apq}, it follows that
$\phi([0,2\p])$ contains the interval $[-a_{pq},a_{pq}]$,
which is not empty  provided (recall \equ{etae} and \equ{M1})
\beq{non empty condition}
 \e
<\left\{
\begin{array}{ll}
\frac{2(1-\ee)^6}{5}|\a_2(\ee)| \ , & {\rm if}\ (p,q)=(1,1), \\  \ \\
\frac{(1-\ee)^6}{10}|\a_3(\ee)|\ ,& {\rm if}\ (p,q)=(3,2).\
\end{array}
\right.
\eeq
Therefore, we can conclude that  the bifurcation equation \equ{kernel} is solved if one assumes that
$|\frac{\etar\nur}\epsr|\le a_{pq}$, i.e. (recall again \equ{etae}, \equ{M1} and \equ{apq}), if

\beq{bifurcation condition}
 \eta
<\left\{
\begin{array}{ll}
\frac{\e}{| \nu-1|}
\left(
2|\a_2(\ee)|-\frac{5 \e}{(1-\ee)^6}
\right)
 \ , & {\rm if}\ (p,q)=(1,1), \\  \ \\
\frac{2\e}{|2 \nu-3|}
\left(
2|\a_3(\ee)|-\frac{20 \e}{(1-\ee)^6}
\right)
\ ,& {\rm if}\ (p,q)=(3,2).\
\end{array}
\right.
\eeq
We have proven the following:

\begin{proposition}\label{pro:conditions}
Let $(p,q)=(1,1)$ or $(p,q)=(3,2)$ and assume \equ{norm green condition}, \equ{range condition}, \equ{non empty condition} and \equ{bifurcation condition}.
Then, \equ{eqn1} admits $p$:$q$ spin--orbit  resonances $x(t)$ as in \equ{tommasone}.
\end{proposition}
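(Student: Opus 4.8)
The plan is to assemble Steps~1--5 into a single existence statement; the only genuinely new ingredient is a continuity plus intermediate--value argument for the scalar bifurcation equation, everything else being already established. By Remark~(i) following the Lyapunov--Schmidt decomposition, it suffices to produce a pair $(u,\xi)\in\bB\times[0,2\pi]$ solving \emph{simultaneously} the range equation \equ{range} and the kernel equation \equ{kernel}; from such a pair the desired $p$:$q$ resonance $x(t)$ of \equ{eqn1} is recovered.

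First I would dispose of the range equation. Since \equ{norm green condition} yields the Green--operator bound \equ{bound on G} through Lemma~\ref{green}, and since \equ{range condition} is exactly the contraction hypothesis \equ{cozza}, Proposition~\ref{pro:range} applies: for every fixed $\xi\in[0,2\pi]$ there is a unique $u(\cdot\,;\xi)\in\bB_{R}$ solving \equ{range}. This eliminates the parameter $\xi$ and reduces the whole problem to the scalar equation $\phi(\xi)=\etar\nur/\epsr$, i.e. the kernel equation \equ{kernel}, where I abbreviate $\phi(\xi):=\phi_{u(\cdot;\xi)}(\xi)$.

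Next I would run the intermediate--value argument on $\phi$. The map $\xi\mapsto\phi(\xi)$ is continuous on $[0,2\pi]$: this rests on the continuous dependence of the fixed point $u(\cdot;\xi)$ on $\xi$, which follows from the uniform (in $\xi$) contraction underlying Proposition~\ref{pro:range} together with the regularity of $\hat\Phi_\xi$ recorded in Remark~(ii). Using the splitting \equ{phi}, the explicit leading term $\phi^{(0)}(\xi)=-2\alpha\sin(2\xi)$ from \equ{phi.so.nd} (with $\alpha=\alpha_2$ or $\alpha_3$ according to the resonance), and the bound \equ{M1} on the correction, I would evaluate $\phi$ at the two points where $\phi^{(0)}$ attains $\pm2|\alpha|$: there one obtains $\phi\ge 2|\alpha|-\epsr M_1=a_{pq}$ and $\phi\le -a_{pq}$ respectively. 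The interval $[-a_{pq},a_{pq}]$ is non--empty exactly under \equ{non empty condition}, so continuity gives $\phi([0,2\pi])\supseteq[-a_{pq},a_{pq}]$.

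Finally, condition \equ{bifurcation condition}, once the definitions \equ{etae} are substituted, states precisely that $|\etar\nur/\epsr|\le a_{pq}$, placing the target value inside the covered interval. Hence there is $\xi^*$ with $\phi(\xi^*)=\etar\nur/\epsr$, solving \equ{kernel}; then $(u(\cdot;\xi^*),\xi^*)$ solves both equations and, by Remark~(i), yields the sought $p$:$q$ spin--orbit resonance. I expect the only delicate point to be the continuity of $\xi\mapsto u(\cdot;\xi)$, on which the intermediate--value theorem depends; the remaining work is the bookkeeping that recasts \equ{non empty condition} and \equ{bifurcation condition} as the single geometric assertion that $\etar\nur/\epsr$ lies in the range of $\phi$.
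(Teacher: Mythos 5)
Your proposal is correct and takes essentially the same route as the paper: the paper's proof of this proposition is precisely the assembly of Steps 1--5 (Green--operator bound from \equ{norm green condition}, contraction solution of the range equation under \equ{range condition}, and the topological argument that $\phi([0,2\pi])$ covers $[-a_{pq},a_{pq}]$ under \equ{non empty condition}, so that \equ{bifurcation condition} places $\etar\nur\!/\epsr$ in the range of $\phi$). The only difference is presentational: you make explicit the continuity of $\xi\mapsto u(\cdot\,;\xi)$ (via the uniform contraction) and the intermediate--value step, which the paper leaves implicit in the assertion that $\phi([0,2\p])$ contains the interval $[-a_{pq},a_{pq}]$.
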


\subsubsection*{Step 6. Lower bounds on $|\a_2(\ee)|$ and $|\a_3(\ee)|$ }
In order to complete the proof of the Theorem, checking the conditions of Proposition~\ref{pro:conditions} for the resonant satellites of the Solar system,
we need to give lower bounds on the absolute values of the Fourier coefficients $\a_2(\ee)$ and $\a_3(\ee)$.
To do this we will simply use Taylor formula to
develop $\alpha_j(\ee)$
in power of $\ee$
up to a suitably large order\footnote{We shall choose $h=4$ for the 1:1 resonances and $h=21$ for the 3:2 case of Mercury.}
\begin{equation}
\label{remainder}
\alpha_j(\ee)=
\sum_{k=0}^h \alpha_j^{(k)} \ee^k
+R_j^{(h)}(\ee)
\end{equation}
and use the analyticity property of $G_\ee$ to get an upper bound on $R_j^{(h)}$ by means of standard Cauchy estimates for holomorphic functions. To use Cauchy estimates, we need an upper bound of $G_\ee$ in a complex eccentricity region. The following simple result will be enough.
\begin{lemma}
\label{paperino}
Fix $0<b<1$. The solution $u_\ee(t)$ of the Kepler equation \equ{kepler_equation}
 is, for every $t\in \real$,  holomorphic with respect to $\ee$ in the complex disk
\begin{equation}\label{checco}
    |\ee|< e_*:= \frac{b}{\cosh b}
\end{equation}
and satisfies
\begin{equation}\label{pluto}
    \sup_{t\in\real} |u_{\ee}(t)-t| \leq b\,.
\end{equation}
Moreover
$\rho_\ee(t)=1-\ee \cos(u_\ee(t))$
satisfies
\begin{equation}\label{bound_rho}
|\rho_\ee(t)|\geq 1-b\,,\qquad \forall\, t\in\mathbb R\,,\
|\ee|<e_*
\end{equation}
and $G_\ee(t)$ (defined in \equ{definition_1}) satisfies
\begin{equation}\label{ziopaperone}
|G_\ee(t)|
\leq
\frac{2}{(1-b)^5}\Big( |1-\ee| (1+\cosh b) +1-b \Big)^2
\,,
\qquad \forall\, t\in\mathbb R\,,\
|\ee|<e_* \,.
\end{equation}
\end{lemma}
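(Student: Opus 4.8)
I would prove the three assertions in turn. Fix $t\in\real$ and recast the Kepler equation \equ{kepler_equation} as a fixed--point problem for the displacement $w:=u-t$, namely $w=\ee\sin(t+w)$. I would show that $\Psi_{\ee,t}(w):=\ee\sin(t+w)$ maps the closed complex disk $\{|w|\le b\}$ into itself and is a contraction there, uniformly in $t\in\real$ and $|\ee|<e_*$. The only inputs are the elementary bounds $|\sin(t+w)|\le\cosh|\Im w|\le\cosh b$ and $|\cos(t+w)|\le\cosh b$, valid because $t$ is real and $|\Im w|\le|w|\le b$, together with the identity $e_*\cosh b=b$ built into \equ{checco}. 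These give $|\Psi_{\ee,t}(w)|\le|\ee|\cosh b<b$ and $|\Psi_{\ee,t}'(w)|=|\ee|\,|\cos(t+w)|\le|\ee|\cosh b<b<1$, so Banach's theorem yields a unique fixed point $w=w_\ee(t)$ with $|w_\ee(t)|\le b$; this is \equ{pluto}. Holomorphy in $\ee$ is automatic: the Picard iterates $w_0\equiv0$, $w_{n+1}=\Psi_{\ee,t}(w_n)$ are holomorphic on $|\ee|<e_*$ and converge uniformly (geometric ratio $<b$), hence the limit is holomorphic by Weierstrass; alternatively the holomorphic implicit function theorem applies, since $\partial_u(u-\ee\sin u)=1-\ee\cos u$ cannot vanish as $|\ee\cos u|\le|\ee|\cosh b<b<1$.

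The bound \equ{bound_rho} is then immediate: $|\rho_\ee(t)|=|1-\ee\cos u_\ee(t)|\ge 1-|\ee|\,|\cos u_\ee(t)|\ge 1-|\ee|\cosh b>1-e_*\cosh b=1-b$ for every real $t$ and $|\ee|<e_*$.

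For \equ{ziopaperone} I would rewrite $G_\ee$ (defined in \equ{definition_1}) through the classical relations between the eccentric and the true anomaly, $\rho_\ee\cos{\rm f}_\ee=\cos u_\ee-\ee$ and $\rho_\ee\sin{\rm f}_\ee=\sqrt{1-\ee^2}\,\sin u_\ee$, which give $\rho_\ee e^{i{\rm f}_\ee}=N$, where
$$
N:=(\cos u_\ee-\ee)+i\sqrt{1-\ee^2}\,\sin u_\ee=\Big(\sqrt{1-\ee}\,\cos\tfrac{u_\ee}{2}+i\sqrt{1+\ee}\,\sin\tfrac{u_\ee}{2}\Big)^{2}.
$$
Then $G_\ee=-e^{2i{\rm f}_\ee}/(2\rho_\ee^{3})=-N^{2}/(2\rho_\ee^{5})$, so by \equ{bound_rho} the claim reduces to the single numerator estimate $|N|\le 2\big(|1-\ee|(1+\cosh b)+(1-b)\big)$, whose square divided by $2(1-b)^5$ is exactly \equ{ziopaperone}. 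To estimate $|N|$ I would work from the factored form, using $|\cos\frac{u_\ee}{2}|,|\sin\frac{u_\ee}{2}|\le\cosh\frac b2$ with $2\cosh^2\frac b2=1+\cosh b$, the bound $|\sqrt{1-\ee^2}|=\sqrt{|1-\ee|\,|1+\ee|}\le|1-\ee|+|\ee|$ (AM--GM after $|1+\ee|\le|1-\ee|+2|\ee|$), and $|\ee|<e_*=b/\cosh b$.

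The first two parts are routine; the real work is the numerator estimate, and the delicate point is retaining the cancellation between the two half--angle terms of $N=P^2$. The crude inequality $|z+w|^2\le2|z|^2+2|w|^2$ discards the favourable cross term and gives only $|N|\le2(1+\cosh b)(|1-\ee|+|\ee|)$; this already matches \equ{ziopaperone} when $2b+e_*\le1$ (in particular for the small eccentricities of the moons and of Mercury), but it overshoots the $\ee$--independent constant $2(1-b)$ as $b\uparrow1$. Obtaining the stated bound for all $b\in(0,1)$ forces one to keep the real and imaginary parts of $N$ together and to control the principal branch of $\sqrt{1\pm\ee}$ on $|\ee|<e_*<1$; this is the only genuinely technical step of the lemma.
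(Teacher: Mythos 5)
Your proofs of \equ{pluto} and \equ{bound_rho} are correct and coincide in substance with the paper's: the paper runs the very same contraction, $v\mapsto \ee\sin\big(v(t)+t\big)$, on the ball of radius $b$ in the space of continuous functions (rather than pointwise in $t$), and obtains holomorphy of the fixed point exactly as you do. The gap is in \equ{ziopaperone}, and you have located it yourself but not closed it. Your reduction $|G_\ee|\le |N|^2/\big(2(1-b)^5\big)$ commits you to the standalone numerator estimate $|N|\le 2\big(|1-\ee|(1+\cosh b)+1-b\big)$, which you never prove: the only inequality you actually derive, $|N|\le 2(1+\cosh b)\big(|1-\ee|+|\ee|\big)$, implies it only when $(1+\cosh b)\,e_*\le 1-b$, i.e. $2b+e_*\le 1$, i.e. roughly $b\le 0.34$. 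The lemma, however, asserts the bound for \emph{every} fixed $b\in(0,1)$, and the values actually used in Step~6 of the paper, $b=0.462678$ and $b=0.768368$, both violate $2b+e_*\le1$. Your parenthetical appeal to ``the small eccentricities of the moons and of Mercury'' does not help: the condition $2b+e_*\le1$ involves only $b$, and in any case \equ{ziopaperone} must hold on the whole complex disk $|\ee|<e_*$ because it feeds the Cauchy estimates of Lemma~\ref{estimate remainder}; restricting to small $b$ is not an option either, since for Mercury ($\ee=0.2056$) one would then have $\ee/(e_*-\ee)>1$ and the remainder bound $R^{(h)}(\ee;b)$ would be useless. So the step you defer as ``the only genuinely technical step'' is precisely the missing proof, and it is not even clear that your target inequality holds for all $b\in(0,1)$: what the paper's argument yields is $|N|\le 2\big(|1-\ee|\,|1+\cos u_\ee|+|\rho_\ee|\big)$, and $|\rho_\ee|$ can be as large as $1+b$, not $1-b$.

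The paper sidesteps the whole difficulty by never splitting the numerator from the denominator the way you do. With $w=\sqrt{(1+\ee)/(1-\ee)}\,\tan(u_\ee/2)$ it writes $e^{2i{\rm f}_\ee}=-(w-i)^4/(w^2+1)^2$, uses the identity $(w-i)^2=(w^2+1)-2i(w-i)$ to get $|w-i|^2\le|w^2+1|+2|w-i|$, hence (by $2|w-i|\le\tfrac12|w-i|^2+2$) the elementary bound $|w-i|^2\le 2|w^2+1|+4$; since $w^2+1=2\rho_\ee/\big((1-\ee)(1+\cos u_\ee)\big)$, this reads $|e^{2i{\rm f}_\ee}|\le 4\big(|1-\ee|\,|1+\cos u_\ee|/|\rho_\ee|+1\big)^2$. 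All occurrences of $\rho_\ee$ now sit in denominators, so the single \emph{lower} bound $|\rho_\ee|\ge1-b$ suffices --- formally, $x\mapsto (A+x)^2/x^5$ is decreasing in $x>0$, so a lower bound on $|\rho_\ee|$ controls $|G_\ee|=|N|^2/(2|\rho_\ee|^5)$ --- and the $1-b$ in \equ{ziopaperone} arises merely from rewriting $A/(1-b)+1=(A+1-b)/(1-b)$, not from any cancellation between the two half--angle terms of $N$. To repair your write--up, replace the unproven $|N|$--estimate by this argument; nothing in your first two parts needs to change.
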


\begin{proof}
Using that
\begin{equation}\label{topolino}
\sup_{|\Im z|<b} |\sin z|=\sup_{|\Im z|<b} |\cos z|= \cosh b\,,
\end{equation}
one sees that for $|\ee|< e_*$ the map
$v \mapsto \chi_\ee (v)$ with
$\big[ \chi_\ee (v) \big] (t):=\ee \sin \big( v(t)+t\big)$ is a contraction
in the closed ball of radius $b$ in the space of continuous functions
endowed with the $\sup$-norm. Moreover, since
$\chi_\ee (v)$ is holomorphic in $\ee$, the same holds for
  the fixed point
$v_{\ee}(t)$ of $\chi_\ee$. The estimate in
\eqref{pluto} follows by observing that
$u_{\ee}(t)=v_{\ee}(t)+t$.
Since by \eqref{pluto} we get
\begin{equation}\label{Im_u}
\big|\Im \big(u_{\ee}(t)\big)\big| \leq b
\,,\qquad \forall\, t\in\mathbb R\,,\
|\ee|<e_* \,,
\end{equation}
estimate \eqref{bound_rho} follows by
$$
|\rho_\ee(t)|\geq 1- |\ee| |\cos(u_\ee(t))|
\stackrel{\eqref{topolino}}\geq
1-e_* \cosh b=1-b\,.
$$
Next,  let $\dst w_\ee(t):=\sqrt{\frac{1+\ee}{1-\ee}} \tan
\left( \frac{u_\ee(t)}{2} \right)$ so that
$
{\rm f}_\ee= 2 \arctan w_\ee$. Then\footnote{Use $\dst e^{2iz}=\frac{i- w }{w+i}=-\frac{(w-i)^2}{w^2+1}$ and $\tan^2 (\alpha/2)=
(1-\cos \alpha)/(1+\cos \alpha)$.},
\begin{equation*}
\label{numeratore}
|e^{2i {\rm f}_\ee(t)}|
= \frac{|w-i|^4 }{|w^2+1|^2}
\leq \left( \frac{4}{|w^2+1|}+2 \right)^2
=
4\left( \frac{|1-\ee||1+\cos u_\ee|}{|1-\ee \cos u_\ee|}+1 \right)^2\ .
\end{equation*}
Then \eqref{ziopaperone}
follows by \eqref{bound_rho},
\eqref{topolino} and \eqref{Im_u}. \eproof
\end{proof}

\begin{lemma}
\label{estimate remainder}
Let $R^{(h)}_j(\ee)$ be as in \equ{remainder}, $0<b<1$ and $0<\ee<b/\cosh b$. Then,
$$    |R_j^{(h)}(\ee)|  \leq
R^{(h)}(\ee;b)
$$
with
$$
R^{(h)}(\ee;b):= \frac{2}{(1-b)^5}\Big( (1+\frac{b}{\cosh b}-\ee) (1+\cosh b) +1-b \Big)^2
\frac{\ee^{h+1}}{(\frac{b}{\cosh b}-\ee)^{h+1}}\ .
$$
\end{lemma}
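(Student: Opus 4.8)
The plan is to show that each Fourier coefficient $\alpha_j(\ee)$ inherits both the analyticity and the explicit bound of $G_\ee$ from Lemma~\ref{paperino}, and then to control the Taylor remainder $R_j^{(h)}$ of \eqref{remainder} by a Cauchy estimate on its $(h+1)$-st derivative combined with the integral form of Taylor's formula. First I would record that, by \eqref{definition_1},
\[
\alpha_j(\ee)=\frac{1}{2\pi}\int_0^{2\pi}G_\ee(t)\,e^{-\i j t}\,\d t,
\]
and that the construction in Lemma~\ref{paperino} makes $\ee\mapsto G_\ee(t)$ holomorphic on the disk $|\ee|<e_*=b/\cosh b$ (see \eqref{checco}) uniformly in $t$; hence $\alpha_j$ is holomorphic there as well, and \eqref{ziopaperone} yields
\[
|\alpha_j(\zeta)|\le \sup_{t\in\real}|G_\zeta(t)|\le \frac{2}{(1-b)^5}\Big(|1-\zeta|(1+\cosh b)+1-b\Big)^2,\qquad |\zeta|<e_*.
\]
Since the right-hand side is increasing in $|1-\zeta|$, any bound on $|1-\zeta|$ obtained on the contours below can be fed directly into it.

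Next I would write the integral form of the Taylor remainder along the real segment $[0,\ee]$,
\[
R_j^{(h)}(\ee)=\frac{1}{h!}\int_0^\ee(\ee-s)^h\,\partial_\ee^{h+1}\alpha_j(s)\,\d s,
\]
which is legitimate because $\alpha_j$ is real--analytic (hence $C^\infty$) on $(-e_*,e_*)$, and estimate the $(h+1)$-st derivative by Cauchy's inequality on the circle centered at $s$ of radius $e_*-\ee$. The key observations are that, for every $s\in[0,\ee]$, this circle lies in the disk of holomorphy — its points $\zeta$ satisfy $|\zeta|\le s+(e_*-\ee)\le e_*$ — and that on it $|1-\zeta|\le(1-s)+(e_*-\ee)\le 1+e_*-\ee$, the worst case occurring at $s=0$. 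Feeding this into the displayed bound on $|\alpha_j|$ gives, uniformly in $s\in[0,\ee]$,
\[
\big|\partial_\ee^{h+1}\alpha_j(s)\big|\le \frac{(h+1)!}{(e_*-\ee)^{h+1}}\,\frac{2}{(1-b)^5}\Big((1+e_*-\ee)(1+\cosh b)+1-b\Big)^2.
\]

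Finally I would substitute this into the integral remainder and use $\int_0^\ee(\ee-s)^h\,\d s=\ee^{h+1}/(h+1)$; the factorial factors $\tfrac{1}{h!}\cdot\tfrac{(h+1)!}{h+1}$ collapse to $1$, and one lands exactly on $R^{(h)}(\ee;b)$. The delicate point — really the only one — is the choice of the Cauchy radius: taking it equal to the constant value $e_*-\ee$ (rather than the $s$-dependent maximal radius $e_*-s$, or the crude $e_*$) is what simultaneously produces the denominator $(e_*-\ee)^{h+1}$ and, through $|1-\zeta|\le 1+e_*-\ee$ at $s=0$, the precise numerator constant $(1+e_*-\ee)$. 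One must also take care that the bound \eqref{ziopaperone}, a priori valid only on the open disk, is applied on circles that meet its boundary only in the limit $s\to\ee$; this is handled by running the argument with radius $e_*-\ee-\delta$ and letting $\delta\to0^+$, the bound being continuous in $\zeta$ up to $|\zeta|=e_*$.
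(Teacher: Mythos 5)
Your proposal is correct and takes essentially the same approach as the paper's proof: Cauchy estimates for $D^{h+1}\alpha_j$ at points of the segment $[0,\ee]$ on disks of radius $e_*-\ee$ (the paper phrases this as a sup over the neighbourhood $[0,\ee]_{e_*-\ee}$), the bound \equ{ziopaperone} applied with $|1-\zeta|\le 1+e_*-\ee$, and the Taylor remainder formula. The only cosmetic difference is that you use the integral form of the remainder whereas the paper uses the Lagrange-type bound $|R_j^{(h)}(\ee)|\le \frac{\ee^{h+1}}{(h+1)!}\sup_{0\le s\le 1}|D^{h+1}\alpha_j(s\ee)|$; both collapse to exactly the same constant $R^{(h)}(\ee;b)$.
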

\begin{proof}
For $\ee,\rho>0$ we set
$$
[0,\ee]_{\rho}:=\{\  z\in\mathbb C\,,\ \ {\rm s.t.}\ \
z=z_1+z_2\,,\ \ z_1\in[0,\ee]\,,\ \
|z_2|<\rho\
\}\,.
$$
Lemma~\ref{paperino} and   standard (complex) Cauchy estimates
imply, for $0\leq s\leq 1$,
$$
|{D^{h+1}}\alpha_j(s\ee)|
\leq \frac{(h+1)!}{(e_*-\ee)^{h+1}}
\sup_{[0,\ee]_{e_*-\ee}}|\alpha_j|
$$
and, therefore,
$$
|R_j^{(h)}(\ee)|
\leq
\frac{\ee^{h+1}}{(e_*-\ee)^{h+1}}
\sup_{[0,\ee]_{e_*-\ee}}|\alpha_j|\ .
$$
By \eqref{ziopaperone}
we obtain
$$
\sup_{[0,\ee]_{e_*-\ee}}|\alpha_j|
\leq
\frac{2}{(1-b)^5}\big( (1+e_*-\ee) (1+\cosh b) +1-b \big)^2
$$
from which,  recalling \eqref{checco}, the lemma follows.
\eproof
\end{proof}

\nl
Now, in order to check the conditions of Proposition~\ref{pro:conditions} we will expand  $\a_2$ in power of $\ee$ up
to order $h=4$ and $\a_3$ up to order $h=21$ . Using the representation formula \equ{gricia} for the $\a_j$ given in Appendix~\ref{appB} we find:
\beqano
\alpha_2(\ee)&=&
- \frac{1}{2} +
 \frac{5}{4} \ee^2 - \frac{13}{32} \ee^4 +
 R^{(4)}_2(\ee)\ ,\\
\alpha_3(\ee)
&=&
-\frac{7}{4} \ee + \frac{123}{32} \ee^3 - \frac{489}{256} \ee^5 + \frac{1763}{4096} \ee^7 - \frac{13527}{327680} \ee^9+ \frac{180369}{13107200} \ee^{11} \\
& & + \frac{5986093}{734003200} \ee^{13} + \frac{24606987}{3355443200} \ee^{15} + \frac{33790034193}{5261334937600} \ee^{17} \\
& &+\frac{1193558821627}{210453397504000} \ee^{19} +
\frac{467145991400853}{92599494901760000}\ee^{21} +
R^{(21)}_3(\ee) \ .
\eeqano
In view of Lemma~\ref{estimate remainder},
we choose, respectively,  $b=0.462678$ and\footnote{The values for $b$ are rather arbitrary (as long as $0<b<1$); our choice is made for optimizing the estimates.} $b=0.768368$ to get lower bounds:
\beqa{cc1}
|\a_2(\ee)|&\ge& \Big| \frac{1}{2} -  \frac{5}{4} \ee^2 +
\frac{13}{32} \ee^4\Big| - |R^{(4)}(\ee;0.462678)|\\
|\alpha_3(\ee)| &\ge & \left|\sum_{k=1}^{21}
\alpha_3^{(k)}
 \ee^k\right| - | R^{(21)}(\ee;0.768368)|\ .
\label{cc2}
\eeqa

\subsubsection*{Step 7. Check of the conditions and conclusion of the proof}
We are now ready to check all conditions of Proposition~\ref{pro:conditions} with
the parameters of the satellite in spin--orbit resonance given in Table 1 and 2.

\nl
In the following Table we report:

\begin{itemize}
\item[] in column 2:
 the lower bounds on $|\a_q(\ee)|$ as obtained in Step~6 using \equ{cc1} and \equ{cc2} (with the eccentricities listed in Table 1 and 2);

\item[] in column 3: the difference between the right hand side and the left hand side of the inequality\footnote{Thus, the inequality is satisfied if the numerical value in the column is positive; the same applies to the 5th column.} \equ{range condition};

\item[] in column 4: the difference between the right hand side and the left hand side of the inequality \equ{non empty condition};

\item[] in column 5:  the right hand side  of the inequality \equ{bifurcation condition}, which is an upper bound for the admissible values of the dissipative parameter $\eta$.

\end{itemize}

\vglue1.truecm

\centerline{
\begin{tabular}{|c|c|c|c|c|}
\hline
\textbf{Satellite}&\textbf{lower bound}& \textbf{r.h.s. -- l.h.s.}  & \textbf{r.h.s. -- l.h.s.} & \textbf{r.h.s. } \\
& \textbf{on $|\alpha_q|$} & \textbf{of Eq. \equ{range condition}} & \textbf{of Eq. \equ{non empty condition}}  & \textbf{of Eq. \equ{bifurcation condition}}  \\
\hline
Moon & $0.45475265$ & $0.1663508$& $0.127144$ & $0.1225335$ \\
 \hline
  Io & $0.49997893$ & $0.1889174$& $0.186489$ & $81.800325$ \\
   \hline
   Europa & $0.49988598$ & $0.1934518$& $0.187978$ & $1.8031043$ \\
    \hline
    Ganymede & $0.49999849$ & $0.1974024$& $0.196745$ & $264.3751$ \\
     \hline
     Callisto & $0.49993049$ & $0.1937258$& $0.189389$ & $5.6260606$\\
      \hline
       Mimas & $0.49938883$ & $0.0989819$& $0.088051$ & $19.852395$ \\
        \hline
        Enceladus & $0.49997228$ & $0.161793$& $0.159015$ & $218.44519$ \\
         \hline
         Tethys & $0.49999999$ & $0.1670079$& $0.166948$ & $458437.46$ \\
          \hline
          Dione & $0.49999395$ & $0.1901481$& $0.188837$ & $281.18521$ \\
           \hline
           Rhea & $0.49999875$ & $0.1895878$& $0.18899$ & $1554.7362$ \\
            \hline
            Titan & $0.49776167$ & $0.1830341$& $0.166905$ & $0.0357326$ \\
             \hline
             Iapetus & $0.49790449$ & $0.171834$& $0.155986$ & $2.2484865$ \\
              \hline
                Ariel & $0.4999982$ & $0.1871186$& $0.186401$ & $1321.448$ \\
                 \hline
                 Umbriel & $0.499998095$ & $0.1833031$& $0.180992$ & $145.83674$ \\
                  \hline
                  Titania & $0.49999849$ & $0.1924958$& $0.191838$ & $910.34423$  \\
                   \hline
                   Oberon & $0.49999755$ & $0.188917$& $0.188081$ & $826.10305$ \\
                    \hline
                    Miranda & $0.49999789$ & $0.1505305$& $0.149754$ & $3623.6286$ \\
                     \hline
                     Charon & $0.49999395$ & $0.1917247$& $0.190414$ & $231.15781$ \\
                      \hline
                      Mercury & $0.27$ & $0.0244515$& $0.006171$ & $0.0012363$ \\
                       \hline
\end{tabular}
}

\nopagebreak

\Giu

\centerline{\footnotesize  {\bf Table 3.} Check of the hypotheses of Proposition~\ref{pro:conditions} for the satellites in spin--orbit resonance
}

\Giu

\noi
The positive value reported in the third and fourth column means that the range condition \equ{range condition}
and the topological condition \equ{non empty condition} are satisfied for all the moons in 1:1 resonance and for Mercury;
the bifurcation condition \equ{bifurcation condition} yields an upper bound on the admissible value for $\h$ (fifth column). Thus, $\h$ has to be smaller than the minimum between the value in the fifth column of Table~3 and the value in the right hand side of
Eq. \equ{norm green condition} (needed to give a bound on the Green operator): such minimum values is seen to be $0.008\cdots$  for the moons in 1:1 resonance and $0.001\cdots$ for Mercury.

\nl
The proof of
the Theorem is complete.
\eproof

\bigskip

\appendix

\section{Proof of Lemma \ref{pesto}}
\label{Appendix_B}

\proof
We first prove \eqref{bergoglio}.
Up to a rescaling we can prove \eqref{bergoglio}
assuming  $\|v'\|_{C^0}=1.$
Assume by contradiction that
$$\|v\|_{C^0}=:c>\pi/2\,.$$
Note that it is obvious that $c\leq \pi$ since $v$
has zero average and, therefore, it must vanish
at some point.
Up to a translation we can assume that $|v|$
attains maximum in $-c$.
In case, multiplying by $-1$, we can also assume
that $-c$ is a minimum namely
$$
\|v\|_{C^0}=c=-v(-c)\,.
$$
Since $\|v'\|_{C^0}=1$ we get
$$
v(t)\leq -c +|t+c| \qquad \forall t\in[-2c,0]
$$
and, therefore,
\begin{equation}\label{ventaglietto2}
v(0)\leq 0\,,\ \ v(-2c)\leq 0\,,\ \ \ \ \
\int_{-2c}^0 v\leq -c^2\,.
\end{equation}
Since $\|v'\|_{C^0}=1$
we also get
$$
v(t)\leq \pi-c -|t-\pi+c|\qquad
\forall\, t\in [0,2\pi -2c]\,.
$$
Then
$$
\int_0^{2\pi-2c}v\leq (\pi-c)^2\,.
$$
Combining with the last inequality in \eqref{ventaglietto2}
we get
$$
\int_{-2c}^{2\pi-2c}v\leq (\pi-c)^2-c^2=\pi (\pi-2c)<0\,,
$$
which contradicts the fact that $v$ has zero average,
proving \eqref{bergoglio}

\nl
We now prove \eqref{bergogliobis}.
Up to a rescaling we can prove \eqref{bergogliobis}
assuming  $\|v''\|_{C^0}=1.$
Assume by contradiction that
\begin{equation}\label{cesello}
\|v\|_{C^0}=:c>\pi^2/8\,.
\end{equation}
Up to a translation we can assume that $|v|$
attains maximum at $0$.
In case, multiplying by $-1$, we can also assume
that $-c$ is a minimum namely
$$
\|v\|_{C^0}=c=-v(0)\,.
$$
Since $\|v''\|_{C^0}=1$ we get
$$
v(t)\leq -c +t^2/2 \qquad \forall t\in\mathbb R\,.
$$
Since $v$ has zero average must exist
\begin{equation}\label{battisti}
t_1\leq -\sqrt{2c}\,,\ \ t_2\geq \sqrt{2c}\,,\ \
t_2-t_1<2\pi\,,
\ \ \
{\rm s.t.}\ \ \
v(t_1)=v(t_2)=0\,,\ \ v(t)<0 \ \ \forall\, t\in(t_1,t_2)\,.
\end{equation}
Moreover
$$
\int_{t_1}^{t_2}v
\leq
\int_{-\sqrt{2c}}^{\sqrt{2c}} -c+\frac{1}{2}t^2=
-\frac23 (2c)^{3/2}\,.
$$
Since $v$ has zero average and is $2\pi$-periodic
\begin{equation}\label{ventaglietto}
\int_{t_2}^{2\pi+t_1} v=
-\int_{t_1}^{t_2}v
\geq
\frac23 (2c)^{3/2}
\,.
\end{equation}
Set
$$
a:=\pi+ (t_1-t_2)/2
$$
and note that
\begin{equation}\label{deandre}
0<a\leq \pi-\sqrt{2c}<\pi/2\,, \qquad
a^2<2c
\end{equation}
by \eqref{battisti} and \eqref{cesello}.
Set
$$
u(t):=v\big(t+\pi+(t_1+t_2)/2\big)\,.
$$
Note that
$u\in\mathbb B\cap C^2$ and, by \eqref{battisti},
$$
\|u\|_{C^0}=c\,,\ \ \|u''\|_{C^0}=1\,,\ \
u(-a)=u(a)=0\,,\ \
\int_{-a}^a u
=
\int_{t_2}^{2\pi+t_1} v
\stackrel{\eqref{ventaglietto}}\geq
\frac23 (2c)^{3/2}\,.
$$
Consider now the even function
$$
w(t):=\frac12(u(t)+u(-t))\,.
$$
Note that $w\in\mathbb B\cap C^2$
and
\begin{equation}\label{battiato}
\|w\|_{C^0}\leq c\,,\ \ \|w''\|_{C^0}\leq 1\,,\ \
w(-a)=0\,,\ \
\int_{-a}^0 w
=\frac12
\int_{-a}^{a} u
\geq
\frac13 (2c)^{3/2}\,.
\end{equation}
Set
$$
z(t):=c-\frac{c}{a^2}t^2\,.
$$
We claim that
\begin{equation}\label{degregori}
z(t)\geq w(t)\qquad \forall\, -a\leq t\leq 0\,.
\end{equation}
Then
$$
\int_{-a}^0 w\leq \int_{-a}^0 z= \frac23 ca
\stackrel{\eqref{deandre}}<
\frac13 (2c)^{3/2}
\stackrel{\eqref{battiato}}
\leq
\int_{-a}^0 w\,,
$$
which is a contradiction.

\nl
Let us prove the claim in \eqref{degregori}.
Note that $z(-a)=w(-a)=0.$
Assume by contradiction that there exists
$ \bar t\in[-a,0)$
such that
$$
z(\bar t)=w(\bar t)\,,\qquad
z(t)\geq w(t)\ \ \forall\, t\in[-a,\bar t]\,,\qquad
z'(\bar t)\leq w'(\bar t)\,.
$$
Then, since $\|w''\|_{C^0}\leq 1$
\begin{eqnarray*}
w(t)
&\geq&
w(\bar t) +w'(\bar t)(t-\bar t) -\frac12 (t-\bar t)^2
\\
&\stackrel{\eqref{deandre}}>&
z(\bar t) +z'(\bar t)(t-\bar t) -\frac{c}{a^2} (t-\bar t)^2
=z(t)
\,,\qquad \forall\, t\in (\bar t,0]\,.
\end{eqnarray*}
Then
$$
w(0)>z(0)=c\,,
$$
which contradicts the first inequality in \eqref{battiato}.
This completes the proof of \eqref{bergogliobis}. \eproof

\section{Fourier coefficients of the Newtonian potential}
\label{appB}
Properties of the Fourier coefficients $\a_j$ of the Newtonian potential $f$, including Eq. \equ{definition_1},  have been discussed, e.g., in  Appendix~A of\footnote{A factor $-1/2$ is missing in the definition of $G(t)$ given in \cite{BC09},  (iii) p. 4366 and, consequently, it has to be included at p. 4367 in line 6 (from above, counting also lines with formulas) in front of ``Re''; in  line 12, 17 and 18 the factor $1/(2\p)$ has to be replaced by  $-1/(4\p)$.} \cite{BC09}.

\nl
Here we provide a simple formula for the Fourier coefficients $\a_j$ of the Newtonian potential $f$ in \equ{def_f}
(compare (d)  of \S 1, and \equ{fourier_f}--\equ{definition_1}); namely we prove that
\begin{equation}\label{gricia}
\alpha_j=
-\frac{1}{4 \pi} \int_0^{2 \pi}
\frac{1}{\rho^2(w^2+1)^2}
\Big[
(w^4-6w^2+1) c_j (u)
-4w(w^2-1)s_j (u)
\Big]
du\,,
\end{equation}
where $w=w(u;\ee)
:=\sqrt{\frac{1+\ee}{1-\ee}} \tan \frac{u}{2}$,
$\rho=1-\ee\cos u$
and
$$
c_j(u):=\cos(ju-j\ee\sin u)\,,\qquad
s_j(u):=\sin(ju-j\ee\sin u)\,.
$$
\begin{proof}
If $z=\arctan w $, then
\begin{equation}\label{polpettone}
 e^{2iz}=\frac{i- w }{w+i}=-\frac{(w-i)^2}{w^2+1}\,,
\end{equation}
so that if $w_\ee(t):=w(u_{\ee}(t),\ee)$
one has
$
f_\ee= 2 \arctan w_\ee
$
and
\begin{eqnarray}\label{pizza}
G_\ee
&=&
-\frac{1}{2\rho_\ee^3}
\frac{(w_\ee-i)^2}{(w_\ee+i)^2}
=
-\frac{1}{2\rho_\ee^3}
\frac{(w_\ee-i)^4}{(w_\ee^2+1)^2}
\\
&=&
-\frac{1}{2\rho_\ee^3}
\frac{1}{(w_\ee^2+1)^2}
\Big(
w_\ee^4-6w_\ee^2+1
-4iw_\ee(w_\ee^2-1)
\Big)
\,.
\nonumber
\end{eqnarray}
By parity properties, it is easy to see that the
$G_j$'s are real, namely $G_j=\bar G_j$, so that
\begin{eqnarray*}
\alpha_j &=& G_j =
\frac{1}{2 \pi} \int_0^{2 \pi} G(t) e^{-ijt} \, dt
= -\frac{1}{4 \pi} \int_0^{2 \pi} \frac{e^{i2f_\ee(t) - ijt}}{\rho_\ee(t)^3} \, dt \\
&=&
-\frac{1}{4 \pi} \int_0^{2 \pi}
\frac{1}{\rho_\ee^3(w_\ee^2+1)^2}
\Big[
(w_\ee^4-6w_\ee^2+1)\cos (jt)
-4w_\ee(w_\ee^2-1)\sin (jt)
\Big]
dt\,.
\end{eqnarray*}
Making the change of variable given by the Kepler equation \eqref{kepler_equation}, i.e.  integrating from $t$ to $u=u_\ee$ and setting $u_\ee(t)' = \frac{1}{\rho_\ee(t)}$ one gets \equ{gricia}.
\eproof
\end{proof}

\section{Small bodies}
\label{minor bodies}
In the Solar system besides the eighteen moons listed in Table~1 and Mercury there are other five minor bodies with
mean radius smaller than $100$ km observed in 1:1 spin--orbit resonance around their planets:   Phobos and Deimos (Mars), Amalthea (Jupiter), Janus and Epimetheus (Saturn).

\Giu

\Giu
\centerline{
{\small
\begin{tabular}{|c|l|c|c|c|c|c|c|}
\hline
\textbf{Principal}&\textbf{Satellite}&\textbf{Eccentricity}& $a$ & $b$ & \textbf{Oblateness}  & $\nu$ \\
\textbf{body}& &\textbf{e} & \textbf{(km)}  & \textbf{(km)}  & $\varepsilon = \frac{3}{2} \frac{a^2-b^2}{a^2+b^2} $&   \\
\hline
Mars & Phobos$^{\ast,\Join}$ & $0.0151$ & $13.4$ & $11.2$  &  $0.26616393443$ & $1.00136808$ \\
\hline
 & Deimos$^{\ast,\Join}$ & $0.0002$ & $7.5$ & $6.1$  &  $0.30558527712$ & $1.00000024$ \\
 \hline
Jupiter  & Amalthea$^{\ast}$ & $0.0031$ & $ 125$ & $ 73$  &  $0.73704304667$ & $1.00005766$ \\
 \hline
Saturn
& Janus$^{\vartriangle}$ & $0.0073$ & $ 97.4$ & $ 96.9 $  &  $0.00771996946$ & $1.000319741$ \\
  \hline
& Epimetheus$^{\vartriangle}$ & $0.0205$ & $ 58.7$ & $ 58.0$  & $0.01799421119 $ & $1.002521568$ \\
  \hline
\end{tabular}
}
}
\nopagebreak

\giu
\centerline
{\footnotesize  {\bf Table 4.}
Physical data of minor bodies in 1:1 spin--orbit resonance}

\noi
{\footnotesize
$^{\ast}$: Thomas, P. C., et al. Icarus 135 (1998).
\\
$^{\Join}$: Thomas, P. C. Icarus 77 (1989)
and
\url{http://solarsystem.nasa.gov/planets/profile.cfm?Object=Mars&Display=Sats}
\\
$^{\vartriangle}$: Porco, C. C.; et al. Science 318 (2007).
}

\Giu

\noi
Besides being small, such bodies have also a quite irregular shape and only  Janus and Epimetheus have a good equatorial symmetry\footnote{For pictures, see:
\url{http://photojournal.jpl.nasa.gov/catalog/PIA10369}
(Phobos),
\url{http://photojournal.jpl.nasa.gov/catalog/PIA11826}
(Deimos),
\url{http://photojournal.jpl.nasa.gov/catalog/PIA02532}
(Amalthea),
\url{http://photojournal.jpl.nasa.gov/catalog/PIA12714}
(Janus),
\url{http://photojournal.jpl.nasa.gov/catalog/PIA12700}
(Epimetheus).
}. Indeed, for these two small moons (and only for them among the minor bodies), our theorem holds as shown by the data
reported in the following table\footnote{Positive values in the 3rd and 4th column and values less than 0.008 in the 5th column  implies that the  assumptions of Proposition~\ref{pro:conditions} hold.}:

\Giu

\centerline{
{\small
\begin{tabular}{|c|c|c|c|c|}
\hline
\textbf{Satellite}&\textbf{lower bound}& \textbf{r.h.s. -- l.h.s.}  & \textbf{r.h.s. -- l.h.s.} & \textbf{r.h.s.} \\
& \textbf{on $|\alpha_q|$} & \textbf{of Eq. \equ{range condition}} & \textbf{of Eq. \equ{non empty condition}}  & \textbf{of Eq. \equ{bifurcation condition}}  \\
\hline
              Janus & $0.4999324$ & $0.1879319$& $0.183652$ & $23.167321$ \\
               \hline
               Epimetheus & $0.49927518$ & $0.1699562$& $0.158377$ & $6.3987689$  \\
                \hline
\end{tabular}
}
}
\nopagebreak

\Giu
\centerline{\footnotesize  {\bf Table 5.}
Check of the hypotheses of
Proposition~\ref{pro:conditions} for the small
satellites in spin--orbit resonance
}

\Giu

\end{document}